\documentclass[twocolumn]{jsiamletters}
\group{
Continuous Optimization
}
\affiliation{Graduate School of Information Science and Technology, The University of Tokyo}{7-3-1, Hongo, Bunkyo-ku, Tokyo, 113-8656, Japan}
\affiliation{Department of Mathematical and Computing Science, School of Computing, Institute of Science Tokyo}{2-12-1, Ookayama, Meguro-ku, Tokyo, 152-8550, Japan}
\authorinfo{Atsushi Tabei}{1}{atsushi.tabei.research@gmail.com}
\authorinfo{Ken'ichiro Tanaka}{2*}{kenichiro@comp.isct.ac.jp}
\email{kenichiro@comp.isct.ac.jp}
\title{Convergence analysis of dynamical systems for optimization by an improved Lyapunov framework}
\abstract{
We study the convergence analysis of continuous-time dynamical systems associated with optimization methods for strongly convex functions. 
Recent works have proposed systematic constructions of Lyapunov functions for such analysis, 
while also revealing limitations of the Lyapunov analysis.
Aujol--Dossal--Rondepierre (2023)
have proposed a technique to address this issue by 
reorganizing Lyapunov functions so as to evaluate a quantity 
$f(x(t)) - f_* - g(t)\|x(t)-x_*\|^2$
rather than 
$f(x(t)) - f_*$. 
By combining this technique with our computer-assisted framework to discover Lyapunov functions, 
we develop an improved method that reproduces an existing convergence rate or yields better rates than previous studies.
}
\keywords{continuous optimization, continuous dynamical system, Lyapunov function, convergence rate}
\theoremstyle{plain}
\newtheorem{thm}{Theorem}
\newtheorem*{thm*}{Theorem}
\newtheorem{lem}{Lemma}

\theoremstyle{definition}
\newtheorem{dfn}{Definition}

\allowdisplaybreaks[4]

\begin{document}

\maketitle

\section{Introduction}

We consider the problem of minimizing a function 
$f : \mathbb{R}^n \to \mathbb{R}$ whose minimum value is $f_{\ast}$
and focus on the analysis of optimization methods through their continuous-time counterparts.

It is well known that many optimization algorithms
admit interpretations as continuous-time dynamical systems described by ordinary differential equations (ODEs). 
For example, 
the gradient descent method that updates the iterates by
\begin{equation}
    \label{eq:GD}
    x_{k+1} = x_k - \tau \nabla f(x_k)
    \qquad
    (\tau > 0)
\end{equation}
can be associated with the ODE
\begin{equation}
\label{eq:grad_flow}
    \dot{x}(t) = -\nabla f(x(t)).
\end{equation}
Furthermore, its accelerated variants also have their continuous-time counterparts.
Such correspondence has been considered useful for both the design and the analysis of optimization methods, 
as it allows one to study convergence behavior via tools from dynamical systems theory. 
Consequently, there has been a substantial body of work devoted to analyzing the convergence rates of
$f(x(t)) - f_{\ast}$ along trajectories $x(t)$ of such continuous-time systems~\cite{SuBoydCandes2016, Wilson2021}.

A central technique in this line of research is the use of Lyapunov functions.

\begin{dfn}[Lyapunov function]
\label{Lyapunov_def}
For a continuous dynamical system defined by an ODE, a function
$\mathcal{E} : \mathbb{R}_{\ge 0} \to \mathbb{R}$
is called a Lyapunov function if it is monotonically non-increasing along the trajectory of the system and bounded from below.
\end{dfn}

A typical method to derive convergence rates using Lyapunov functions 
is given as follows (cf.~\cite{SuBoydCandes2016, Wilson2021}). 

\begin{lem}
\label{lem:Lyapunov_basic}
Suppose that a Lyapunov function $\mathcal{E}$ satisfies
\begin{equation}
    \mathcal{E}(t) = \mathcal{F}(t) + e^{\gamma(t)}(f(x(t)) - f_*),
\quad
\mathcal{F}(t) \ge 0
\notag
\end{equation}
for some $\mathcal{F}(t)$ and $\gamma(t)$. 
Then the system satisfies
\begin{equation}
    \label{eq:convergence_rate}
    f(x(t)) - f_* = \mathrm{O}(e^{-\gamma(t)}).
\end{equation}
\end{lem}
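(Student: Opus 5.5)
The argument only uses monotonicity and nonnegativity of $\mathcal{F}$; no property of the dynamics is needed beyond $\mathcal{E}$ being a Lyapunov function in the sense of Definition~\ref{Lyapunov_def}.

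\emph{Step 1: bound $\mathcal{E}$ by its initial value.} By Definition~\ref{Lyapunov_def}, $\mathcal{E}$ is non-increasing along the trajectory, so for every $t \ge 0$,
\begin{equation}
\mathcal{E}(t) \le \mathcal{E}(0) = \mathcal{F}(0) + e^{\gamma(0)}\bigl(f(x(0)) - f_*\bigr),
\notag
\end{equation}
and the right-hand side is a finite constant $C$ determined by the initial data.

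\emph{Step 2: drop $\mathcal{F}$.} Since $\mathcal{F}(t) \ge 0$, the decomposition in the statement gives
\begin{equation}
e^{\gamma(t)}\bigl(f(x(t)) - f_*\bigr) = \mathcal{E}(t) - \mathcal{F}(t) \le \mathcal{E}(t) \le C.
\notag
\end{equation}

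\emph{Step 3: conclude.} Multiplying by $e^{-\gamma(t)} > 0$ yields $0 \le f(x(t)) - f_* \le C e^{-\gamma(t)}$, where the lower bound holds because $f_*$ is the minimum value of $f$. This is exactly \eqref{eq:convergence_rate}. The constant $C$ is nonnegative: $\mathcal{F}(0) \ge 0$ and $f(x(0)) - f_* \ge 0$.

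There is no real obstacle. The only point needing care is that "non-increasing along the trajectory" is used on the whole interval $[0,t]$, which requires $\mathcal{E}(0)$ to be finite. This holds provided $\gamma$ and $\mathcal{F}$ are defined at $t = 0$; if the system is posed on $t \ge t_0 > 0$, one replaces $0$ by $t_0$ throughout.
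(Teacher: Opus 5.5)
Your proof is correct and is essentially the paper's own argument. Monotonicity gives $\mathcal{E}(t) \le \mathcal{E}(0)$, nonnegativity of $\mathcal{F}$ gives $e^{\gamma(t)}(f(x(t)) - f_*) \le \mathcal{E}(t)$, and multiplying by $e^{-\gamma(t)}$ yields the rate. Your remark about replacing $0$ by a starting time $t_0>0$ is a useful addition, and it matches how the paper later applies the lemma, with $\mathcal{E}(T)$ in place of $\mathcal{E}(0)$.
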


\begin{proof}
Since $\mathcal{E}$ is non-increasing, we have
\begin{equation}
    e^{\gamma(t)}(f(x(t)) - f_*) \leq \mathcal{E}(t) \leq \mathcal{E}(0),
    \notag
\end{equation}
which immediately implies \eqref{eq:convergence_rate}.
\end{proof}

While this method is powerful, 
the construction of Lyapunov functions is nontrivial and has traditionally relied on problem-specific insight. 
A systematic approach to this problem has been recently proposed by Suh--Roh--Ryu \cite{SuhRohRyu2022}, 
who introduce a method for discovering Lyapunov functions based on conserved quantities. 
However, their procedure involves certain degrees of arbitrariness in the selection of candidate structures.

In our previous work \cite{TabeiTanaka2025},
we address this issue by eliminating such arbitrariness through a computer-assisted exhaustive search for Lyapunov functions and dynamical systems. 
Using this approach in combination with the principle of Lemma~\ref{lem:Lyapunov_basic}, 
we reproduce known results and, in some cases, improve convergence rates for several continuous-time systems.
In this work, 
the analysis is carried out by directly estimating the objective gap $f(x(t)) - f_*$ using the constructed Lyapunov functions.

However, our previous framework also reveals a limitation. 
For the system corresponding to Nesterov's accelerated gradient method (NAG) 
\begin{align}
\ddot{x} + \frac{r}{t}\dot{x} + \nabla f(x) = 0
\quad
(r > 0),
\label{eq:NAG}
\end{align}
it just gives a worse convergence rate than 
that obtained in the work of Su--Boyd--Cand\`es {\cite[\S 4.3 Theorem 8]{SuBoydCandes2016}}
in the case that $f$ is strongly convex (Definition~\ref{dfn:strongly_convex} below). 
In this case, 
they employ a function $\mathcal{E}(t)$
that does not satisfy the conditions in Definition~\ref{Lyapunov_def}
and derive the rate via an inductive argument.

This observation indicates that the convergence analysis is not restricted to the framework based on Lemma~\ref{lem:Lyapunov_basic}. 
Indeed, since the ultimate goal is to establish the boundedness of 
$\mathrm{e}^{\gamma(t)} (f(x(t)) - f_{\ast})$, 
the specific form of the Lyapunov function in Lemma~\ref{lem:Lyapunov_basic} is not necessarily essential. 
In this direction, 
Aujol--Dossal--Rondepierre
\cite{AujolDossalRondepierre2023}
have proposed an alternative approach employing more general Lyapunov functions for evaluating
\begin{align}
f(x(t)) - f_* - g(t)\|x(t)-x_*\|^2
\label{eq:mod_quant}
\end{align}
for some $g(t)$ instead of $f(x(t)) - f_*$. 
They derive convergence rates of $f(x(t)) - f_*$ by 
evaluating the quantity in \eqref{eq:mod_quant} and $g(t)\|x(t)-x_*\|^2$. 
Their method gives flexibility to the Lyapunov framework 
and increases the likelihood of finding better rates.

In this paper, we observe that our framework of \cite{TabeiTanaka2025} can be naturally combined with this technique, 
in the sense that Lyapunov functions for the modified quantity in~\eqref{eq:mod_quant} can also be discovered in the computer-assisted manner. 
As a result, 
we find two significant results that are better than those of \cite{TabeiTanaka2025}. 
Specifically, 
for the system in~\eqref{eq:NAG}, 
we reproduce the convergence rate in \cite{SuBoydCandes2016} by a simpler argument
with a Lyapunov function given by the framework of \cite{TabeiTanaka2025}. 
Futhermore, 
for the system of a generalized NAG \cite{ChengLiuShang2025}
\begin{align}
\ddot{x} + \frac{r}{t^\alpha}\dot{x} + \nabla f(x) = 0
\quad
(r > 0,\ 0 < \alpha < 1),
\label{eq:gen_NAG_alpha}
\end{align}
we establish a convergence rate strictly better than that derived in \cite{TabeiTanaka2025}.
Throughout this paper, we assume that the objective function $f$ is strongly convex.

\begin{dfn}[$\mu$-strongly convex]
\label{dfn:strongly_convex}
For $\mu > 0$, a differentiable function $f$ is said to be $\mu$-strongly convex if
\begin{equation}
    f(y) \ge f(x) + \langle \nabla f(x), y-x \rangle + \frac{\mu}{2}\|y-x\|^2
\end{equation}
holds for all $x,y \in \mathbb{R}^n$.
\end{dfn}

\section{Dynamical System $\ddot{x} + \frac{r}{t}\dot{x} + \nabla f = 0$}
Here, we consider the continuous dynamical system in~\eqref{eq:NAG}. 
We assume $r>0$.
This system corresponds to the optimization method known as Nesterov's accelerated gradient method (NAG) for convex functions. 
In {\cite[\S 4.3 Theorem 8]{SuBoydCandes2016}}, the convergence rate 
\begin{equation}
    f - f_* = \mathrm{O} \left( t^{-\frac{2}{3}r} \right)
    \label{eq:m_2_over_3_rate}
\end{equation}
is given through inductive method
with a certain function $\mathcal{E}(t)$. 
The authors \cite{TabeiTanaka2025} tried to derive the same rate via Lyapunov analysis. 
However, they only got
\begin{equation}
    f - f_* = \mathrm{O} \left( t^{-\frac{1}{2}r-\frac{1}{2}} \right),
\end{equation}
which is strictly worse for $r>3$. 
In this paper, we reproduce the rate in~\eqref{eq:m_2_over_3_rate} via Lyapunov analysis.

To this end, 
we perform our computer-assisted procedure in \cite{TabeiTanaka2025}
to list candidates of Lyapunov functions. 
Among them, 
we choose the function
\begin{align}
        \mathcal{E}(t) &= t^{\frac{2}{3}r} \left( f - f_* - \frac{r^2 - 3r}{9t^2} \|x-x_*\|^2 \right) 
        \notag \\
        &\phantom{=} + \frac{1}{2} t^{\frac{2}{3}r} \left\| \dot{x} + \frac{2r}{3t} (x-x_*) \right\|^2.
        \label{eq:Lyap_NAG}
\end{align}
Then we can show that $\dot{\mathcal{E}}(t) \leq 0$ for $t \geq T$ with
\begin{equation}
    T = \sqrt{\frac{2r^2}{9\mu}}.
        \label{eq:def_T_NAG}
\end{equation}

\begin{lem}
    \label{lem:decrease_NAG}
    Suppose that $f$ is $\mu$-strongly convex and $x(t)$ satisfies
    \eqref{eq:NAG}.
    Then $\mathcal{E}(t)$ in~\eqref{eq:Lyap_NAG} is nonincreasing for $t \geq T$, 
        where $T$ is defined by \eqref{eq:def_T_NAG}. 
\end{lem}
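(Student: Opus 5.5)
The plan is to compute $\dot{\mathcal{E}}(t)$ directly along trajectories of \eqref{eq:NAG} and show that every term either cancels identically or is controlled by $\mu$-strong convexity once $t\ge T$. Write $y=x-x_*$, $v=\dot x$, $p=\tfrac{2}{3}r$ and $c=\tfrac{r^2-3r}{9}$, so that
\begin{equation}
\mathcal{E}(t)=t^{p}\Bigl(f-f_*-\frac{c}{t^2}\|y\|^2\Bigr)+\frac12 t^{p}\Bigl\|v+\frac{p}{t}y\Bigr\|^2 .
\notag
\end{equation}
Differentiating, the only place the dynamics enter is $\dot v=-\frac{r}{t}v-\nabla f$. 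Substituting it gives
\begin{equation}
\frac{d}{dt}\Bigl(v+\frac{p}{t}y\Bigr)=\frac{p-r}{t}v-\nabla f-\frac{p}{t^2}y .
\notag
\end{equation}

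Next I expand $\dot{\mathcal{E}}$ and group it into the monomials $\langle v,\nabla f\rangle$, $\|v\|^2$, $\langle y,v\rangle$, $\|y\|^2$, together with $f-f_*$ and $\langle y,\nabla f\rangle$. The constants $p$ and $c$ produced by the computer search should make the three velocity-dependent groups vanish.
\begin{itemize}
\item The $\langle v,\nabla f\rangle$ terms cancel directly.
\item The $\|v\|^2$ coefficient is $t^{p-1}\bigl((p-r)+\tfrac p2\bigr)$, which is $0$ because $p=\tfrac{2r}{3}$.
\item The $\langle y,v\rangle$ coefficient is $t^{p-2}\bigl(-2c+p(p-r)-p+p^2\bigr)$, which is also $0$ by the choice of $c$.
\end{itemize}

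What remains is
\begin{equation}
\dot{\mathcal{E}}=p\,t^{p-1}\bigl(f-f_*-\langle \nabla f,y\rangle\bigr)+t^{p-3}K\|y\|^2,
\notag
\end{equation}
with
\begin{equation}
K=(2-p)c+\frac{p^3}{2}-p^2 .
\notag
\end{equation}
Now apply Definition~\ref{dfn:strongly_convex} with the point $x_*$ in place of $y$ and $x$ in place of $x$. This gives $f_*\ge f-\langle\nabla f,y\rangle+\frac{\mu}{2}\|y\|^2$, that is,
\begin{equation}
f-f_*-\langle\nabla f,y\rangle\le-\frac{\mu}{2}\|y\|^2 .
\notag
\end{equation}
Since $p>0$, it follows that
\begin{equation}
\dot{\mathcal{E}}\le t^{p-3}\|y\|^2\Bigl(K-\frac{p\mu}{2}t^2\Bigr).
\notag
\end{equation}

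The final step is to evaluate $K$. A short computation gives
\begin{equation}
K=\frac{2r^3}{27}-\frac{2r}{3}\le\frac{2r^3}{27}.
\notag
\end{equation}
For $t\ge T$ we have
\begin{equation}
\frac{p\mu}{2}t^2\ge\frac{r\mu}{3}\cdot\frac{2r^2}{9\mu}=\frac{2r^3}{27}\ge K,
\notag
\end{equation}
so $\dot{\mathcal{E}}\le0$ on $[T,\infty)$. This is exactly the claim of the lemma, with $T$ as in \eqref{eq:def_T_NAG}.

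The main obstacle is purely bookkeeping: checking that the $\|v\|^2$ and $\langle y,v\rangle$ coefficients cancel exactly, and computing $K$ correctly. Any leftover cross term in $\langle y,v\rangle$ could not be controlled by strong convexity alone. So I would verify these two cancellations first, and only then use the $\|y\|^2$ coefficient to pin down $T$.
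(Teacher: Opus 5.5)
Your proposal is correct and is essentially the paper's proof. Your expansion reproduces exactly the identity the paper obtains by ``simple algebra'', namely $\dot{\mathcal{E}}=\frac{2r}{3}t^{\frac{2}{3}r-1}\bigl(f-f_*-\langle\nabla f,x-x_*\rangle\bigr)+\frac{2r^3-18r}{27}t^{\frac{2}{3}r-3}\|x-x_*\|^2$. You then apply $\mu$-strong convexity at $x_*$ and the bound $t^2\ge\frac{2r^2}{9\mu}$ exactly as the paper does, and your explicit check that the $\langle \dot x,\nabla f\rangle$, $\|\dot x\|^2$ and cross-term coefficients vanish fills in the step the paper omits (your $K-\frac{p\mu}{2}t^2\le-\frac{2r}{3}$ even recovers the paper's sharper bound $\dot{\mathcal{E}}\le-\frac{2r}{3}t^{\frac{2}{3}r-3}\|x-x_*\|^2$).
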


\begin{proof}
Because of \eqref{eq:NAG}, 
    simple algebra gives
\begin{align}
            \dot{\mathcal{E}}(t) &= - t^{\frac{2}{3}r} \biggl( \frac{2r}{3t}(f_* - f - \langle \nabla f, x_* - x \rangle) 
            \notag \\
            &\phantom{=- t^{\frac{2}{3}r} \biggl( \frac{2r}{3t}\quad}- \frac{2r^3 - 18r}{27t^3} \|x-x_*\|^2 \biggr).
            \notag
\end{align}
    Since $f$ is $\mu$-strongly convex, the inequality
    \begin{equation}
        \dot{\mathcal{E}}(t) \leq - t^{\frac{2}{3}r} \cdot \frac{r(9\mu t^2 - 2r^2) + 18r}{27t^3} \|x-x_*\|^2
        \notag
    \end{equation}
    holds. 
    Since
    $t^2 \geq \frac{2r^2}{9\mu}$ holds 
    for $t \geq T$,
    we have
    \begin{equation}
    \dot{\mathcal{E}}(t) \leq - t^{\frac{2}{3}r} \cdot \frac{2r}{3t^3} \|x-x_*\|^2 \leq 0.
    \notag
    \end{equation}
    Therefore, $\mathcal{E}(t)$ is nonincreasing for $t \geq T$.
\end{proof}

\begin{lem}
    \label{lem:order_main_NAG}
    Suppose that $f$ is $\mu$-strongly convex and $x(t)$ satisfies
    \eqref{eq:NAG}.
    Then, 
    \begin{equation}
        f - f_* - \frac{r^2 - 3r}{9t^2} \|x-x_*\|^2 = \mathrm{O} \left( t^{-\frac{2}{3}r} \right)
    \end{equation}
    holds for $t \geq T$, 
        where $T$ is defined by \eqref{eq:def_T_NAG}. 
\end{lem}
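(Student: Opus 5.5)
We derive the bound from the monotonicity in Lemma~\ref{lem:decrease_NAG}, essentially as in Lemma~\ref{lem:Lyapunov_basic}, except that the Lyapunov function~\eqref{eq:Lyap_NAG} now contains the modified quantity~\eqref{eq:mod_quant} with $g(t) = \frac{r^2-3r}{9t^2}$ in place of $f - f_*$. Write
\begin{equation}
    \mathcal{E}(t) = t^{\frac{2}{3}r} \left( f - f_* - \frac{r^2 - 3r}{9t^2} \|x-x_*\|^2 \right) + \mathcal{F}(t),
    \notag
\end{equation}
where
\begin{equation}
    \mathcal{F}(t) = \frac{1}{2} t^{\frac{2}{3}r} \left\| \dot{x} + \frac{2r}{3t} (x-x_*) \right\|^2 \geq 0 .
    \notag
\end{equation}

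By Lemma~\ref{lem:decrease_NAG}, $\mathcal{E}$ is nonincreasing on $[T,\infty)$, so $\mathcal{E}(t) \le \mathcal{E}(T)$ for all $t \ge T$. Dropping the nonnegative term $\mathcal{F}(t)$ then gives
\begin{equation}
    t^{\frac{2}{3}r} \left( f - f_* - \frac{r^2 - 3r}{9t^2} \|x-x_*\|^2 \right) \le \mathcal{E}(T).
    \notag
\end{equation}
Here $\mathcal{E}(T)$ is a finite constant, since $T>0$ and the trajectory $x$ is $C^1$ at $t=T$. Dividing by $t^{\frac{2}{3}r}$ yields the one-sided estimate
\begin{equation}
    f - f_* - \frac{r^2-3r}{9t^2}\|x-x_*\|^2 \le \mathcal{E}(T)\, t^{-\frac{2}{3}r}, \qquad t \ge T.
    \notag
\end{equation}

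This upper bound is what the statement means by $\mathrm{O}(t^{-\frac{2}{3}r})$, and it is what is needed later: one then bounds $g(t)\|x-x_*\|^2$ separately to recover $f-f_*$. The only subtle point is the sign. For $r>3$ the quantity $f - f_* - g(t)\|x - x_*\|^2$ need not be nonnegative, so a two-sided $\mathrm{O}$ cannot be obtained from $\mathcal{E}$ alone.

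If a lower bound is also wanted, strong convexity gives $f - f_* \ge \frac{\mu}{2}\|x-x_*\|^2$. For $t \ge T$ we have $t^2 \ge \frac{2r^2}{9\mu}$, hence
\begin{equation}
    \frac{r^2-3r}{9t^2} \le \frac{r^2}{9t^2} \le \frac{\mu}{2},
    \notag
\end{equation}
so the bracket is in fact nonnegative for $t \ge T$. In that case the estimate above is a genuine two-sided bound. I expect this sign check to be the only step requiring care; everything else follows directly from Lemma~\ref{lem:decrease_NAG}.
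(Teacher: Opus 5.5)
Your proof is correct and follows the paper's route. The paper simply invokes Lemma~\ref{lem:decrease_NAG} together with the argument of Lemma~\ref{lem:Lyapunov_basic}, which is exactly your chain: $\mathcal{E}(t)\le\mathcal{E}(T)$, drop the nonnegative velocity term, divide by $t^{\frac{2}{3}r}$. Your extra nonnegativity check for $t\ge T$ is the same strong-convexity estimate the paper uses at the start of its proof of Lemma~\ref{lem:order_remain_NAG}.
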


\begin{proof}
    By Lemma \ref{lem:decrease_NAG}, this holds from the same discussion as in Lemma \ref{lem:Lyapunov_basic}.
\end{proof}

\begin{lem}
    \label{lem:order_remain_NAG}
    Suppose that $f$ is $\mu$-strongly convex and $x(t)$ satisfies \eqref{eq:NAG}. Then,
    \begin{equation}
        \frac{r^2 - 3r}{9t^2} \|x-x_*\|^2 = \mathrm{O} \left( t^{-\frac{2}{3}r} \right)
        \notag
    \end{equation}
    holds for $t \geq T$, 
        where $T$ is defined by \eqref{eq:def_T_NAG}.  
\end{lem}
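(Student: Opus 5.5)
The plan is to combine the upper bound of Lemma~\ref{lem:order_main_NAG} with the quadratic growth that strong convexity gives. First I will fix the constants. Setting $c := \frac{r^2-3r}{9}$, the proof of Lemma~\ref{lem:order_main_NAG} uses Lemma~\ref{lem:decrease_NAG} and discards the nonnegative kinetic term in \eqref{eq:Lyap_NAG}. This gives, for all $t \ge T$,
\begin{equation}
f - f_* - \frac{c}{t^2}\|x-x_*\|^2 \le \mathcal{E}(T)\, t^{-\frac{2}{3}r}.
\notag
\end{equation}
Only this one-sided bound will be used, with $C := \max\{\mathcal{E}(T),0\}$.

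Next, applying Definition~\ref{dfn:strongly_convex} with $x = x_*$ and $\nabla f(x_*)=0$ gives $f - f_* \ge \frac{\mu}{2}\|x-x_*\|^2$. Substituting this into the bound above yields
\begin{equation}
\left(\frac{\mu}{2} - \frac{c}{t^2}\right)\|x-x_*\|^2 \le C\, t^{-\frac{2}{3}r}.
\notag
\end{equation}
The key point, and the only place where the choice \eqref{eq:def_T_NAG} of $T$ matters, is to show that the coefficient on the left is bounded below by a positive constant for $t \ge T$. I will split into two cases according to the sign of $c$.
\begin{itemize}
\item If $r \le 3$, then $c \le 0$, so the coefficient is at least $\mu/2$.
\item If $r > 3$, then $t^2 \ge \frac{2r^2}{9\mu}$ gives $\frac{c}{t^2} \le \frac{9\mu (r^2-3r)}{9 \cdot 2 r^2} = \frac{\mu}{2}\left(1 - \frac{3}{r}\right)$. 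Hence the coefficient is at least $\frac{3\mu}{2r} > 0$.
\end{itemize}
In either case we obtain $\|x-x_*\|^2 \le K\, t^{-\frac{2}{3}r}$ for $t \ge T$, with a constant $K$ depending only on $\mu$, $r$ and $\mathcal{E}(T)$.

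Finally, I will multiply by $\frac{|c|}{t^2} \le \frac{|c|}{T^2}$, which is valid because $t \ge T > 0$. This gives $\left|\frac{r^2-3r}{9t^2}\right|\|x-x_*\|^2 \le \frac{|c| K}{T^2}\, t^{-\frac{2}{3}r}$, which is the claim; in fact it gives the stronger rate $\mathrm{O}(t^{-\frac{2}{3}r-2})$.

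The main obstacle is the positivity of $\frac{\mu}{2} - \frac{c}{t^2}$. This is exactly the condition that $T$ in \eqref{eq:def_T_NAG} was chosen to secure: the margin $1 - (1 - 3/r) = 3/r$ is what keeps the coefficient away from zero uniformly in $t$.
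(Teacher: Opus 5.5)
Your proof is correct, but it takes a different route from the paper's.

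\textbf{The paper's route.} The paper works with the \emph{kinetic} part of $\mathcal{E}$. It first checks that $f-f_*-\frac{r^2-3r}{9t^2}\|x-x_*\|^2\ge 0$ for $t\ge T$, which gives $\frac12 t^{\frac23 r}\|\dot x+\frac{2r}{3t}(x-x_*)\|^2\le\mathcal{E}(T)$. It then sets $y(t)=t^{\frac23 r}(x(t)-x_*)$, bounds $\|\dot y\|\le\sqrt{2\mathcal{E}(T)}\,t^{\frac13 r}$, and integrates. This yields only $\|x-x_*\|=\mathrm{O}(t^{1-\frac13 r})$, and the prefactor $t^{-2}$ in $\frac{r^2-3r}{9t^2}$ is exactly what absorbs the resulting $t^2$ loss.

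\textbf{Your route.} You use the \emph{potential} part instead: the one-sided bound behind Lemma~\ref{lem:order_main_NAG} combined with the quadratic growth $f-f_*\ge\frac{\mu}{2}\|x-x_*\|^2$. You then absorb the term using the slack $\frac{\mu}{2}-\frac{c}{t^2}\ge\min\{\frac{\mu}{2},\frac{3\mu}{2r}\}$, valid for $t\ge T$.

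\textbf{What yours buys.}
\begin{itemize}
\item It is shorter and needs no auxiliary $y$ and no integration. The paper integrates from $0$, although its bound on $\|\dot y\|$ is only established for $t\ge T$.
\item Your case split treats the sign of $r^2-3r$ explicitly. The paper's last chain of inequalities multiplies by a negative coefficient when $r<3$, so it is literally valid only after passing to absolute values.
\item It gives the sharper $\|x-x_*\|^2=\mathrm{O}(t^{-\frac23 r})$, hence the lemma with an extra factor $t^{-2}$. It even makes Theorem~\ref{thm:rate_NAG} an immediate corollary of Lemma~\ref{lem:order_main_NAG}.
\end{itemize}

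\textbf{What the paper's buys.} It needs only the \emph{nonnegativity} of the modified quantity \eqref{eq:mod_quant}, not a quantitative margin of $g(t)$ below $\mu/2$. It extracts decay of $\|x-x_*\|$ from the dynamics through the kinetic term. This is the Aujol--Dossal--Rondepierre mechanism the paper wants to showcase and reuse for \eqref{eq:gen_NAG_alpha}. Your absorption argument instead relies on $g(t)/(\frac{\mu}{2}-g(t))$ staying bounded, which happens to hold comfortably here.
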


\begin{proof}
    The inequality $9t^2 \geq \frac{2r^2}{\mu}$ holds for $t \geq T$. 
    Therefore, since $f$ is $\mu$-strongly convex,
\begin{align}
            & f - f_* - \frac{r^2 - 3r}{9t^2} \|x-x_*\|^2
            \notag \\
            &\geq \left(f - f_* - \frac{\mu}{2}\|x-x_*\|^2\right) + \frac{r}{3t^2}\|x-x_*\|^2 \geq 0
            \notag
\end{align}
    holds. 
    From this inequality and the expression of $\mathcal{E}(t)$ in~\eqref{eq:Lyap_NAG},
    the inequality
    \begin{equation}
        \frac{1}{2} t^{\frac{2}{3}r} \left\| \dot{x} + \frac{2r}{3t} (x-x_*) \right\|^2 \leq \mathcal{E}(t)
        \notag
    \end{equation}
    holds. Also, from Lemma \ref{lem:decrease_NAG}, we have $\mathcal{E}(t) \leq \mathcal{E}(T).$ Therefore, we have
    \begin{equation}
        \frac{1}{2} t^{\frac{2}{3}r} \left\| \dot{x} + \frac{2r}{3t} (x-x_*) \right\|^2 \leq \mathcal{E}(T).
        \label{eq:exp_norm_2_leq_E_T_NAG}
    \end{equation}
    
    Here, we define
    \begin{equation}
    \label{eq:y_definition_NAG}
        y(t) = t^{\frac{2}{3}r} \left( x(t) - x_* \right).
    \end{equation}
    Then, from simple algebra, we have
    \begin{equation}
        \dot{x} + \frac{2r}{3t}(x(t) - x_* ) = \frac{\dot{y}(t)}{t^{\frac{2}{3}r}}.
        \label{eq:expr_by_dot_y_NAG}
    \end{equation}
    It follows from~\eqref{eq:exp_norm_2_leq_E_T_NAG} and~\eqref{eq:expr_by_dot_y_NAG} that
    \begin{equation}
        \|\dot{y}(t)\| \leq \sqrt{2\mathcal{E}(T)} t^{\frac{1}{3}r}.
        \notag
    \end{equation}
By using this inequality, 
we can deduce that 
\begin{align}
        \|y(t)\| 
        & \leq 
        \|y(0)\| + \int_{0}^{t} \| \dot{y}(s) \| \mathrm{d}s 
        \notag \\
        & \leq 
        \|y(0)\| + \frac{3 \sqrt{2\mathcal{E}(T)}}{r+3} t^{\frac{1}{3}r+1}, 
        \notag
\end{align}
which implies
    \begin{equation}
    \| y(t) \| t^{-\frac{2}{3}r}
    \leq
    \frac{3 \sqrt{2\mathcal{E}(T)}}{r+3} t^{-\frac{1}{3}r+1} + \|y(0)\| t^{-\frac{2}{3}r}. 
        \label{eq:key_ineq_for_norm_yt_NAG}
    \end{equation}
From~\eqref{eq:y_definition_NAG} and~\eqref{eq:key_ineq_for_norm_yt_NAG}, we have
\begin{align}
            \frac{r^2 - 3r}{9t^2} \|x-x_*\|^2 
            & = \frac{r^2 - 3r}{9t^2} \left( \| y(t) \| t^{-\frac{2}{3}r} \right)^2 
            \notag \\
            &\leq \frac{2r(r-3)\mathcal{E}(T)}{(r+3)^2} t^{-\frac{2}{3}r} 
            \notag \\
            &\phantom{\leq}+ \frac{2r(r-3) \sqrt{2\mathcal{E}(T)}\|y(0)\|}{3(r+3)} t^{-r-1} 
            \notag \\
            &\phantom{\leq}+ \frac{r(r-3)\|y(0)\|^2}{9} t^{-\frac{4}{3}r-2}.
        \label{eq:final_key_ineq_for_norm_NAG}
\end{align}
    Since $r>0$ and $\frac{2r(r-3)\mathcal{E}(T)}{(r+3)^2}$ is a constant,
        the first term in the right-hand side of~\eqref{eq:final_key_ineq_for_norm_NAG} is dominant. 
    Therefore we have
    \begin{equation}
        \frac{r^2 - 3r}{9t^2} \|x-x_*\|^2 = \mathrm{O} \left( t^{-\frac{2}{3}r} \right).
        \notag
    \end{equation}
\end{proof}

\begin{thm}
    \label{thm:rate_NAG}
    Suppose that $f$ is $\mu$-strongly convex and $x(t)$ satisfies \eqref{eq:NAG}. Then,
    \begin{equation}
        f - f_* = \mathrm{O} \left( t^{-\frac{2}{3}r} \right)
    \end{equation}
    holds for $t \geq T$, 
            where $T$ is defined by \eqref{eq:def_T_NAG}.  
\end{thm}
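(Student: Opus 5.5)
The plan is to decompose the objective gap into the two quantities already controlled by Lemma~\ref{lem:order_main_NAG} and Lemma~\ref{lem:order_remain_NAG}. For every $t$ we have the identity
\begin{equation}
    f - f_* = \left( f - f_* - \frac{r^2 - 3r}{9t^2}\|x-x_*\|^2 \right) + \frac{r^2 - 3r}{9t^2}\|x-x_*\|^2 .
    \notag
\end{equation}
For $t \ge T$, Lemma~\ref{lem:order_main_NAG} bounds the first bracket by $C_1 t^{-\frac{2}{3}r}$. Concretely, $t^{\frac{2}{3}r}$ times the bracket is at most $\mathcal{E}(t) \le \mathcal{E}(T)$, by Lemma~\ref{lem:decrease_NAG} and the nonnegativity of the squared-norm term in \eqref{eq:Lyap_NAG}. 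Lemma~\ref{lem:order_remain_NAG} bounds the second term by $C_2 t^{-\frac{2}{3}r}$. Adding the two bounds gives $f - f_* \le (C_1 + C_2)\, t^{-\frac{2}{3}r}$ for $t \ge T$. Together with $f - f_* \ge 0$, this is the claim.

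The one point needing care is the sign of the coefficient $\frac{r^2-3r}{9t^2}$, which is negative when $r<3$. In that case the second term is nonpositive, so it only helps. We then get $f - f_* \le$ (first bracket) $\le \mathcal{E}(T)\, t^{-\frac{2}{3}r}$ directly, and Lemma~\ref{lem:order_remain_NAG} is needed only in absolute value, or not at all. For $r \ge 3$ the coefficient is nonnegative, and the decomposition above applies verbatim.

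I expect no real obstacle, since all the analytic work (the decay of $\mathcal{E}$ and the integration argument through $y(t)$) has already been done in the preceding lemmas. The main thing to check is that the constants hidden in the two $\mathrm{O}$-estimates are uniform on $[T,\infty)$. For the first, the constant is $\mathcal{E}(T)$. For the second, it comes from \eqref{eq:final_key_ineq_for_norm_NAG}: the lower-order terms $t^{-r-1}$ and $t^{-\frac{4}{3}r-2}$ are bounded by $t^{-\frac{2}{3}r}$ times a constant depending on $T$, because $t \ge T > 0$. So the sum is $\mathrm{O}(t^{-\frac{2}{3}r})$ on $t \ge T$.
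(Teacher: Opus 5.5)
Your proof is correct and is essentially the paper's argument. The paper proves Theorem~\ref{thm:rate_NAG} by adding the bounds of Lemmas~\ref{lem:order_main_NAG} and~\ref{lem:order_remain_NAG} through the same decomposition of $f-f_*$. Your extra care about the sign of $\frac{r^2-3r}{9t^2}$ when $r<3$ is a useful refinement the paper leaves implicit: multiplying by a negative coefficient, as in \eqref{eq:final_key_ineq_for_norm_NAG}, would reverse the inequality, whereas you simply drop the nonpositive term. Your check that the lower-order terms are absorbed uniformly on $[T,\infty)$ is likewise implicit in the paper.
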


\begin{proof}
    This follows from Lemmas \ref{lem:order_main_NAG} and \ref{lem:order_remain_NAG}.
\end{proof}

\section{Dynamical System $\ddot{x} + \frac{r}{t^{\alpha}}\dot{x} + \nabla f = 0$}

Here, we consider the continuous dynamical system in~\eqref{eq:gen_NAG_alpha}. 
We assume $r>0$ and $0 < \alpha < 1$. This dynamical system is first considered in \cite{ChengLiuShang2025}, and the given convergence rate is
\begin{equation}
    f - f_* = \mathrm{O} \left( \mathrm{e}^{-\frac{1}{2} \frac{r}{1-\alpha} t^{1-\alpha}} \right).
\end{equation}
Then, \cite{TabeiTanaka2025} proves an improved rate
\begin{equation}
    f - f_* = \mathrm{O} \left( \mathrm{e}^{-\left(\frac{2}{3} - \epsilon\right) \frac{r}{1-\alpha} t^{1-\alpha}} \right)
\end{equation}
with an arbitrary small positive constant $\epsilon$. 
In Theorem~\ref{thm:rate_alpha} below, 
we show a further improved rate. 

To this end, 
we perform our computer-assisted procedure in \cite{TabeiTanaka2025}
to list candidates of Lyapunov functions. 
Among them, 
we choose the function
\begin{align}
        \mathcal{E}(t) &= \mathrm{e}^{\frac{2}{3} \frac{r}{1-\alpha} t^{1-\alpha}} \biggl( f - f_* - \frac{r^2t^{-\alpha} - 3r\alpha t^{-1}}{9t^{\alpha}} \|x-x_*\|^2 \biggr) 
        \notag\\
        &\phantom{=} + \frac{1}{2} \mathrm{e}^{\frac{2}{3} \frac{r}{1-\alpha} t^{1-\alpha}} \left\|\dot{x} + \frac{2}{3} r t^{-\alpha} (x-x_*) \right\|^2.
        \label{eq:Lyap_gen_NAG_alpha}
\end{align}
Then we can show that $\dot{\mathcal{E}}(t) \leq 0$ for $t \geq T$ with
\begin{equation}
    T = \left( \frac{2r^2}{9\mu} \right)^{\frac{1}{2\alpha}}.
    \label{eq:def_T_alpha}
\end{equation}

\begin{lem}
    \label{lem:decrease_alpha}
    Suppose that $f$ is $\mu$-strongly convex and $x(t)$ satisfies 
    \eqref{eq:gen_NAG_alpha}. 
    Then $\mathcal{E}(t)$ in \eqref{eq:Lyap_gen_NAG_alpha} is nonincreasing for $t \geq T$, 
    where $T$ is defined by \eqref{eq:def_T_alpha}. 
\end{lem}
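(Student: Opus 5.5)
The plan is to differentiate $\mathcal{E}(t)$ in \eqref{eq:Lyap_gen_NAG_alpha} directly, as in the proof of Lemma~\ref{lem:decrease_NAG}. I will use \eqref{eq:gen_NAG_alpha} to eliminate $\ddot{x}$ and strong convexity to bound the single remaining sign-indefinite term. To organize the computation, write $u = x - x_*$, $v = \dot{x}$, $a(t) = \frac{2}{3} r t^{-\alpha}$ and $\gamma(t) = \frac{2}{3}\frac{r}{1-\alpha}t^{1-\alpha}$, so that $\dot\gamma = a$. Also write
\begin{equation}
g(t) = \frac{r^2 t^{-2\alpha} - 3r\alpha t^{-1-\alpha}}{9},
\notag
\end{equation}
so that $\mathcal{E} = \mathrm{e}^{\gamma}\bigl(f - f_* - g\|u\|^2 + \frac12\|v + a u\|^2\bigr)$. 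The ODE then reads $\dot v = -\frac{3a}{2} v - \nabla f$.

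Differentiating gives
\begin{align}
\mathrm{e}^{-\gamma}\dot{\mathcal{E}} &= a(f-f_*) - a g\|u\|^2 + \tfrac{a}{2}\|v+au\|^2 + \langle \nabla f, v\rangle - \dot g\|u\|^2
\notag\\
&\phantom{=} - 2g\langle u,v\rangle + \bigl\langle v + a u,\, -\tfrac{a}{2}v - \nabla f + \dot a u\bigr\rangle .
\notag
\end{align}
I will then expand and collect terms by type.
\begin{itemize}
\item The $\|v\|^2$ terms cancel: $+\frac a2$ against $-\frac a2$.
\item The $\langle \nabla f, v\rangle$ terms cancel.
\item The coefficient of $\langle u,v\rangle$ is $\dot a + \frac{a^2}{2} - 2g$. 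Substituting $\dot a = -\frac23 r\alpha t^{-1-\alpha}$ and $\frac{a^2}{2} = \frac29 r^2 t^{-2\alpha}$ shows it vanishes identically. This is precisely why $g$ has its particular form, and it is the step I would check most carefully.
\end{itemize}
What remains is
\begin{equation}
\mathrm{e}^{-\gamma}\dot{\mathcal{E}} = a\bigl(f - f_* - \langle\nabla f, u\rangle\bigr) + \Bigl(\tfrac{a^3}{2} - a g + a\dot a - \dot g\Bigr)\|u\|^2 .
\notag
\end{equation}

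Next, $\mu$-strong convexity with $y = x_*$ gives $f - f_* - \langle \nabla f, u\rangle \le -\frac{\mu}{2}\|u\|^2$. Since $a>0$, the first term is bounded by $-\frac{\mu a}{2}\|u\|^2 = -\frac{r\mu}{3} t^{-\alpha}\|u\|^2$.

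The main obstacle is simplifying the bracket, since several $t^{-2\alpha-1}$ terms must cancel. Computing term by term:
\begin{itemize}
\item $\frac{a^3}{2} - ag = a\bigl(\frac19 r^2 t^{-2\alpha} + \frac13 r\alpha t^{-1-\alpha}\bigr)$;
\item $a\dot a = -\frac49 r^2\alpha t^{-2\alpha-1}$;
\item $-\dot g = \frac29\alpha r^2 t^{-2\alpha-1} - \frac13 r\alpha(1+\alpha)t^{-2-\alpha}$.
\end{itemize}
The $t^{-2\alpha-1}$ contributions, $\frac29 - \frac49 + \frac29$, cancel, and the bracket becomes $\frac{2}{27}r^3 t^{-3\alpha} - \frac13 r\alpha(1+\alpha)t^{-2-\alpha}$. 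Combining with the strong convexity bound,
\begin{equation}
\dot{\mathcal{E}}(t) \le -\mathrm{e}^{\gamma(t)}\Bigl(\frac{r t^{-3\alpha}(9\mu t^{2\alpha} - 2r^2)}{27} + \frac{r\alpha(1+\alpha)}{3}t^{-2-\alpha}\Bigr)\|x-x_*\|^2 .
\notag
\end{equation}

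Finally, for $t \ge T$ with $T$ as in \eqref{eq:def_T_alpha} we have $t^{2\alpha} \ge \frac{2r^2}{9\mu}$, so both summands in the parentheses are nonnegative. Hence $\dot{\mathcal{E}}(t) \le 0$, and $\mathcal{E}$ is nonincreasing for $t\ge T$.
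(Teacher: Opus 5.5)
Your proof is correct and follows the same route as the paper. You differentiate $\mathcal{E}$, eliminate $\ddot{x}$ via \eqref{eq:gen_NAG_alpha}, cancel the $\|\dot{x}\|^2$, $\langle\nabla f,\dot{x}\rangle$ and $\langle x-x_*,\dot{x}\rangle$ terms, bound $f-f_*-\langle\nabla f,x-x_*\rangle\le-\frac{\mu}{2}\|x-x_*\|^2$, and conclude using $t^{2\alpha}\ge\frac{2r^2}{9\mu}$. Your coefficient $\frac{2}{27}r^3t^{-3\alpha}-\frac{1}{3}r\alpha(1+\alpha)t^{-2-\alpha}$ is the right one (the paper's first displayed formula for $\dot{\mathcal{E}}$ writes $2r^2$ where $2r^3$ is meant, and its next line implicitly corrects this), and you usefully supply the ``simple algebra'' that the paper omits.
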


\begin{proof}
    Because of \eqref{eq:gen_NAG_alpha}, simple algebra gives
\begin{align}
            \dot{\mathcal{E}}(t) &= -\mathrm{e}^{\frac{2}{3} \frac{r}{1-\alpha} t^{1-\alpha}} \biggl( \frac{2}{3}rt^{-\alpha} (f_* - f - \langle \nabla f, x_* - x \rangle) 
            \notag \\
            &\phantom{=-\mathrm{e}^{\frac{2}{3} \frac{r}{1-\alpha}}} - \frac{2r^2t^{-2\alpha} - 9r\alpha (1+\alpha) t^{-2}}{27t^{\alpha}}\|x-x_*\|^2 \biggr).
            \notag
\end{align}
    Since $f$ is $\mu$-strongly convex, the inequality
\begin{align}
            \dot{\mathcal{E}}(t) &\leq -\mathrm{e}^{\frac{2}{3} \frac{r}{1-\alpha} t^{1-\alpha}} 
            \notag \\
            &\phantom{=} \cdot \frac{ \left(9\mu t^{2\alpha} - 2r^2\right)r t^{-2\alpha} + 9r\alpha (1+\alpha) t^{-2}}{27t^{\alpha}}  \|x-x_*\|^2
            \notag
\end{align}
    holds. Since $t^{2\alpha} \geq \frac{2r^2}{9\mu}$ holds for $t \geq T$, we have
    \begin{equation}
        \dot{\mathcal{E}}(t) \leq -\mathrm{e}^{\frac{2}{3} \frac{r}{1-\alpha} t^{1-\alpha}} \cdot \frac{r\alpha (1+\alpha)t^{-2}}{3t^{\alpha}} \|x-x_*\|^2 \leq 0.
        \notag 
    \end{equation}
    Therefore, $\mathcal{E}(t)$ is nonincreasing for $t \geq T$. 
\end{proof}

\begin{lem}
    \label{lem:order_main_alpha}
    Suppose that $f$ is $\mu$-strongly convex and $x(t)$ satisfies \eqref{eq:gen_NAG_alpha}.  Then,
    \begin{equation}
        f - f_* - \frac{r^2t^{-\alpha} - 3r\alpha t^{-1}}{9t^{\alpha}} \|x-x_*\|^2 = \mathrm{O} \left( \mathrm{e}^{-\frac{2}{3} \frac{r}{1-\alpha} t^{1-\alpha}} \right)
        \notag
    \end{equation}
    holds for $t \geq T$, 
        where $T$ is defined by \eqref{eq:def_T_alpha}. 
\end{lem}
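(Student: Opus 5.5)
We argue exactly as in Lemma~\ref{lem:order_main_NAG}, combining Lemma~\ref{lem:decrease_alpha} with the reasoning of Lemma~\ref{lem:Lyapunov_basic}. Write $\phi(t) = \frac{2}{3}\frac{r}{1-\alpha}t^{1-\alpha}$ and
\begin{equation}
G(t) = f - f_* - \frac{r^2t^{-\alpha} - 3r\alpha t^{-1}}{9t^{\alpha}}\|x-x_*\|^2 .
\notag
\end{equation}
By \eqref{eq:Lyap_gen_NAG_alpha}, $\mathcal{E}(t) = \mathrm{e}^{\phi(t)} G(t) + \mathcal{F}(t)$ with
\begin{equation}
\mathcal{F}(t) = \frac{1}{2}\mathrm{e}^{\phi(t)}\left\|\dot{x} + \frac{2}{3}rt^{-\alpha}(x-x_*)\right\|^2 \ge 0 .
\notag
\end{equation}

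First, Lemma~\ref{lem:decrease_alpha} shows that $\mathcal{E}$ is nonincreasing on $[T,\infty)$, so $\mathcal{E}(t) \le \mathcal{E}(T)$ for all $t \ge T$. Second, dropping the nonnegative term $\mathcal{F}(t)$ gives
\begin{equation}
\mathrm{e}^{\phi(t)} G(t) \le \mathcal{E}(t) \le \mathcal{E}(T),
\notag
\end{equation}
and hence $G(t) \le \mathcal{E}(T)\,\mathrm{e}^{-\phi(t)}$. Since $\mathcal{E}(T)$ is a finite constant determined by the trajectory at time $T$, this is the claimed bound $\mathrm{O}(\mathrm{e}^{-\phi(t)})$.

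The one point needing care is whether the $\mathrm{O}$-statement is meant two-sidedly, i.e.\ for $|G(t)|$. For that we need $G(t) \ge 0$ when $t \ge T$, mirroring the first step of the proof of Lemma~\ref{lem:order_remain_NAG}. Using $t^{2\alpha} \ge \frac{2r^2}{9\mu}$, we get
\begin{equation}
\frac{r^2 t^{-2\alpha}}{9} \le \frac{\mu}{2}.
\notag
\end{equation}
Strong convexity at the minimizer gives $f - f_* \ge \frac{\mu}{2}\|x-x_*\|^2$. Combining the two,
\begin{equation}
G(t) \ge \left(f - f_* - \frac{\mu}{2}\|x-x_*\|^2\right) + \frac{r\alpha}{3t^{1+\alpha}}\|x-x_*\|^2 \ge 0 .
\notag
\end{equation}
Thus $0 \le G(t) \le \mathcal{E}(T)\mathrm{e}^{-\phi(t)}$ for $t \ge T$. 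No real obstacle is expected; the only check is this sign computation and the finiteness of $\mathcal{E}(T)$, which holds because the trajectory is defined at $T > 0$.
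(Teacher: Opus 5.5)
Your proof is correct and is essentially the paper's argument: the paper simply invokes Lemma~\ref{lem:decrease_alpha} together with the reasoning of Lemma~\ref{lem:Lyapunov_basic}, with $\mathcal{E}(T)$ in place of $\mathcal{E}(0)$, exactly as you do. Your extra check that the quantity is nonnegative for $t \ge T$ is also correct, and it coincides with the first step of the paper's proof of Lemma~\ref{lem:order_remain_alpha}.
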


\begin{proof}
    By Lemma \ref{lem:decrease_alpha}, this follows from the same discussion as Lemma \ref{lem:Lyapunov_basic}.
\end{proof}

\begin{lem}
    \label{lem:order_remain_alpha}
    Suppose that $f$ is $\mu$-strongly convex and $x(t)$ satisfies \eqref{eq:gen_NAG_alpha}. Then, 
    \begin{equation}
        \frac{r^2t^{-\alpha} - 3r\alpha t^{-1}}{9t^{\alpha}} \|x-x_*\|^2 = \mathrm{O} \left( \mathrm{e}^{-\frac{2}{3} \frac{r}{1-\alpha} t^{1-\alpha}} \right)
        \notag
    \end{equation}
    holds for $t \geq T$, 
        where $T$ is defined by \eqref{eq:def_T_alpha}. 
\end{lem}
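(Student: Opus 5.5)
We will follow the structure of the proof of Lemma~\ref{lem:order_remain_NAG}. Write $E(t)=\mathrm{e}^{\frac{2}{3}\frac{r}{1-\alpha}t^{1-\alpha}}$ and $g(t)=\frac{r^2t^{-\alpha}-3r\alpha t^{-1}}{9t^{\alpha}}$.

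\textbf{Step 1: the potential term is nonnegative.} We first check that
\[
f-f_*-g(t)\|x-x_*\|^2\ge 0 \quad\text{for } t\ge T.
\]
For $t\ge T$ we have $t^{2\alpha}\ge \frac{2r^2}{9\mu}$, hence $\frac{r^2}{9t^{2\alpha}}\le\frac{\mu}{2}$. Since $g(t)\le \frac{r^2}{9t^{2\alpha}}$, strong convexity gives $f-f_*\ge \frac{\mu}{2}\|x-x_*\|^2\ge g(t)\|x-x_*\|^2$. Consequently, using $\mathcal{E}(t)\le\mathcal{E}(T)$ from Lemma~\ref{lem:decrease_alpha},
\[
\frac12 E(t)\bigl\|\dot x+\tfrac23 r t^{-\alpha}(x-x_*)\bigr\|^2\le \mathcal{E}(T).
\]

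\textbf{Step 2: an integrating factor.} Set $y(t)=E(t)(x(t)-x_*)$. Since $\frac{d}{dt}\bigl(\frac23\frac{r}{1-\alpha}t^{1-\alpha}\bigr)=\frac23 rt^{-\alpha}$, we get
\[
\dot y=E(t)\bigl(\dot x+\tfrac23 rt^{-\alpha}(x-x_*)\bigr).
\]
Step 1 then yields $\|\dot y(t)\|\le\sqrt{2\mathcal{E}(T)}\,E(t)^{1/2}$ for $t\ge T$. Integrating from $T$ (not from $0$, since the bound only holds for $t\ge T$) gives
\[
\|y(t)\|\le \|y(T)\|+\sqrt{2\mathcal{E}(T)}\int_T^t \mathrm{e}^{\frac13\frac{r}{1-\alpha}s^{1-\alpha}}\,\mathrm{d}s .
\]

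\textbf{Step 3: the main obstacle, the integral.} Unlike the polynomial case, this integral has no closed form. We bound it via
\[
\frac{d}{ds}\mathrm{e}^{\frac13\frac{r}{1-\alpha}s^{1-\alpha}}=\frac r3 s^{-\alpha}\mathrm{e}^{\frac13\frac{r}{1-\alpha}s^{1-\alpha}},
\]
so that, using $s^{\alpha}\le t^{\alpha}$ on $[T,t]$,
\[
\int_T^t \mathrm{e}^{\frac13\frac{r}{1-\alpha}s^{1-\alpha}}\,\mathrm{d}s\le \frac{3t^{\alpha}}{r}\,\mathrm{e}^{\frac13\frac{r}{1-\alpha}t^{1-\alpha}}.
\]
Hence
\[
\|x-x_*\|=E(t)^{-1}\|y(t)\|\le C_1t^{\alpha}\mathrm{e}^{-\frac13\frac{r}{1-\alpha}t^{1-\alpha}}+\|y(T)\|\,\mathrm{e}^{-\frac23\frac{r}{1-\alpha}t^{1-\alpha}},
\]
with $C_1=\frac{3\sqrt{2\mathcal{E}(T)}}{r}$.

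\textbf{Step 4: conclusion.} Squaring, and using $|g(t)|\le \frac{r^2}{9}t^{-2\alpha}+\frac{r\alpha}{3}t^{-1-\alpha}=O(t^{-2\alpha})$, we obtain
\[
g(t)\|x-x_*\|^2\le C\bigl(\mathrm{e}^{-\frac23\frac{r}{1-\alpha}t^{1-\alpha}}+t^{-\alpha}\mathrm{e}^{-\frac{r}{1-\alpha}t^{1-\alpha}}+t^{-2\alpha}\mathrm{e}^{-\frac43\frac{r}{1-\alpha}t^{1-\alpha}}\bigr).
\]
The factor $t^{2\alpha}$ from the integral bound is cancelled exactly by $t^{-2\alpha}$ in $g$, so the leading term is $O\bigl(\mathrm{e}^{-\frac23\frac{r}{1-\alpha}t^{1-\alpha}}\bigr)$ with no polynomial loss. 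The remaining two terms decay faster, which proves the statement. If $g(t)<0$, the quantity is negative; its absolute value obeys the same bound, so the $\mathrm{O}$-estimate holds in either case.
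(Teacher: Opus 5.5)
Your proposal is correct and follows the paper's proof essentially step for step. The shared steps are nonnegativity of the potential term via strong convexity, the integrating factor $y(t)=\mathrm{e}^{\frac{2}{3}\frac{r}{1-\alpha}t^{1-\alpha}}(x(t)-x_*)$, the integral bound obtained by inserting $s^{\alpha}\le t^{\alpha}$ against $\frac{r}{3}s^{-\alpha}\mathrm{e}^{\frac13\frac{r}{1-\alpha}s^{1-\alpha}}$, and the cancellation of $t^{2\alpha}$ by $g(t)=\mathrm{O}(t^{-2\alpha})$. Your two small deviations are improvements: integrating from $T$ rather than from $0$ (the bound on $\|\dot y\|$ is only established for $t\ge T$), and bounding $|g(t)|$ instead of multiplying the squared inequality by $g(t)$, which also covers the part of $[T,\infty)$ where $g(t)<0$.
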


\begin{proof}
    The inequality $9t^{2\alpha} \geq \frac{2r^2}{\mu}$ holds for $t \geq T$. 
    Therefore, since $f$ is $\mu$-strongly convex,
\begin{align}
            & f - f_* - \frac{r^2t^{-\alpha} - 3r\alpha t^{-1}}{9t^{\alpha}} \|x-x_*\|^2
            \notag \\
            &\geq \left(f - f_* - \frac{\mu}{2}\|x-x_*\|^2\right) + \frac{r\alpha t^{-1}}{3t^{\alpha}}\|x-x_*\|^2 \geq 0
            \notag
\end{align}
    holds. 
    From this inequality and the expression of $\mathcal{E}(t)$ in~\eqref{eq:Lyap_gen_NAG_alpha}, 
    we have
    the inequality
    \begin{equation}
        \frac{1}{2} \mathrm{e}^{\frac{2}{3} \frac{r}{1-\alpha} t^{1-\alpha}} \left\|\dot{x} + \frac{2}{3} r t^{-\alpha} (x-x_*) \right\|^2 \leq \mathcal{E}(t).
        \notag
    \end{equation}
    Also, from Lemma \ref{lem:decrease_alpha}, we have $\mathcal{E}(t) \leq \mathcal{E}(T).$ Therefore, we have
    \begin{equation}
        \frac{1}{2} \mathrm{e}^{\frac{2}{3} \frac{r}{1-\alpha} t^{1-\alpha}} \left\|\dot{x} + \frac{2}{3} r t^{-\alpha} (x-x_*) \right\|^2 \leq \mathcal{E}(T).
        \label{eq:exp_norm_2_leq_E_T}
    \end{equation}    
    
    Here, we define
    \begin{equation}
    \label{eq:y_definition_alpha}
        y(t) = \mathrm{e}^{\frac{2}{3}\frac{r}{1-\alpha}t^{1-\alpha}} \left( x(t) - x_* \right).
    \end{equation}
    Then, from simple algebra, we have
    \begin{equation}
        \dot{x} + \frac{2}{3} rt^{-\alpha} (x-x_*) = \frac{\dot{y}(t)}{\mathrm{e}^{\frac{2}{3} \frac{r}{1-\alpha}t^{1-\alpha}}}.
        \label{eq:expr_by_dot_y}
    \end{equation}
    It follows from~\eqref{eq:exp_norm_2_leq_E_T} and~\eqref{eq:expr_by_dot_y} that
    \begin{equation}
        \|\dot{y}(t)\| \leq \sqrt{2\mathcal{E}(T)} \mathrm{e}^{\frac{1}{3} \frac{r}{1-\alpha} t^{1-\alpha}}. 
        \notag 
    \end{equation}
By using this inequality, 
we can deduce that 
\begin{align}
        \|y(t)\| 
        & \leq 
        \|y(0)\| + \int_{0}^{t} \| \dot{y}(s) \| \mathrm{d}s 
        \notag \\
        & \leq 
        \|y(0)\| + \sqrt{2\mathcal{E}(T)} \int_{0}^{t} \frac{3}{r} t^{\alpha} \frac{r}{3} s^{-\alpha} \mathrm{e}^{\frac{1}{3} \frac{r}{1-\alpha} s^{1-\alpha}} \mathrm{d}s 
        \notag \\
        & \leq 
        \| y(0) \| + \frac{3\sqrt{2\mathcal{E}(T)}}{r}t^{\alpha}\mathrm{e}^{\frac{1}{3} \frac{r}{1-\alpha}t^{1-\alpha}}, 
        \notag 
\end{align}
which implies
\begin{align}
& \|y(t)\| \mathrm{e}^{-\frac{2}{3}\frac{r}{1-\alpha}t^{1-\alpha}} 
\notag \\
& \leq 
\frac{3\sqrt{2\mathcal{E}(T)}}{r}t^{\alpha}\mathrm{e}^{- \frac{1}{3} \frac{r}{1-\alpha}t^{1-\alpha}} 
+ \| y(0) \| \mathrm{e}^{-\frac{2}{3}\frac{r}{1-\alpha}t^{1-\alpha}}.
        \label{eq:key_ineq_for_norm_yt_alpha}
\end{align}
From~\eqref{eq:y_definition_alpha} and~\eqref{eq:key_ineq_for_norm_yt_alpha}, we have 
\begin{align}
            &\frac{r^2t^{-\alpha} - 3r\alpha t^{-1}}{9t^{\alpha}} \|x-x_*\|^2 
            \notag \\
            & = \left( \frac{r^{2}}{9t^{2\alpha}} - \frac{r\alpha}{3t^{1+\alpha}} \right) \left( \| y(t) \| \mathrm{e}^{-\frac{2}{3}\frac{r}{1-\alpha}t^{1-\alpha}}  \right)^2 
            \notag \\
            &\leq 2\mathcal{E}(T) \mathrm{e}^{-\frac{2}{3} \frac{r}{1-\alpha}t^{1-\alpha}}  + \frac{2r\sqrt{2\mathcal{E}(T)} \|y(0)\|}{3t^{\alpha}} \mathrm{e}^{- \frac{r}{1-\alpha}t^{1-\alpha}} 
            \notag \\
            &\phantom{\leq}+  \frac{r^2 \| y(0) \|^2}{9t^{2\alpha}}  \mathrm{e}^{-\frac{4}{3} \frac{r}{1-\alpha}t^{1-\alpha}} - \frac{6\alpha\mathcal{E}(T)}{rt^{1-\alpha}}\mathrm{e}^{-\frac{2}{3} \frac{r}{1-\alpha}t^{1-\alpha}} 
            \notag \\
            &\phantom{\leq} - \frac{2\alpha\sqrt{2\mathcal{E}(T)} \|y(0)\|} {t} \mathrm{e}^{- \frac{r}{1-\alpha}t^{1-\alpha}} 
            \notag \\
            & \phantom{\leq} - \frac{r\alpha \| y(0) \|^2}{3t^{1+\alpha}} \mathrm{e}^{-\frac{4}{3} \frac{r}{1-\alpha}t^{1-\alpha}}.
        \label{eq:final_key_ineq_for_norm}
\end{align}
    Since $r>0$, $0<\alpha<1$ and $2\mathcal{E}(T)$ is a constant, 
    the first term in the right-hand side of~\eqref{eq:final_key_ineq_for_norm} is dominant. 
    Therefore we have
    \begin{equation}
        \frac{r^2t^{-\alpha} - 3r\alpha t^{-1}}{9t^{\alpha}} \|x-x_*\|^2 = \mathrm{O} \left( \mathrm{e}^{-\frac{2}{3} \frac{r}{1-\alpha} t^{1-\alpha}} \right).
        \notag 
    \end{equation}
\end{proof}

\begin{thm}
    \label{thm:rate_alpha}
    Suppose that $f$ is $\mu$-strongly convex and $x(t)$ satisfies \eqref{eq:gen_NAG_alpha}. Then,
    \begin{equation}
        f - f_* = \mathrm{O} \left( \mathrm{e}^{-\frac{2}{3} \frac{r}{1-\alpha} t^{1-\alpha}} \right)
    \end{equation}
    holds for $t \geq T$,
    where $T$ is defined by \eqref{eq:def_T_alpha}. 
\end{thm}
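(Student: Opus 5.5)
The plan is to mirror the proof of Theorem~\ref{thm:rate_NAG}: split the objective gap as
\begin{equation}
    f - f_* = \left( f - f_* - \frac{r^2t^{-\alpha} - 3r\alpha t^{-1}}{9t^{\alpha}} \|x-x_*\|^2 \right) + \frac{r^2t^{-\alpha} - 3r\alpha t^{-1}}{9t^{\alpha}} \|x-x_*\|^2
    \notag
\end{equation}
and bound the two pieces separately for $t \ge T$, with $T$ given by \eqref{eq:def_T_alpha}. Both pieces have already been estimated, so the theorem reduces to adding two $\mathrm{O}$-bounds with the same rate function $\mathrm{e}^{-\frac{2}{3}\frac{r}{1-\alpha}t^{1-\alpha}}$.

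For the first piece, apply Lemma~\ref{lem:order_main_alpha}. It comes from the monotonicity in Lemma~\ref{lem:decrease_alpha}: the squared-norm term in \eqref{eq:Lyap_gen_NAG_alpha} is nonnegative, so the bracket times the exponential weight is at most $\mathcal{E}(T)$. For the second piece, apply Lemma~\ref{lem:order_remain_alpha}. It bounds $\|x-x_*\|^2$ through the auxiliary function $y(t)$ in \eqref{eq:y_definition_alpha} and the estimate \eqref{eq:final_key_ineq_for_norm}. Summing the two bounds gives $f - f_* \le C\,\mathrm{e}^{-\frac{2}{3}\frac{r}{1-\alpha}t^{1-\alpha}}$ for $t \ge T$ with some constant $C$. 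Since $f - f_* \ge 0$, the lower side is automatic and the claimed $\mathrm{O}$-estimate follows.

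The main obstacle is making sure each lemma gives a genuine upper bound on $t \ge T$, not only an asymptotic one. For Lemma~\ref{lem:order_remain_alpha}, I would check that every term in \eqref{eq:final_key_ineq_for_norm} other than the first is bounded by a constant multiple of $\mathrm{e}^{-\frac{2}{3}\frac{r}{1-\alpha}t^{1-\alpha}}$ on $[T,\infty)$. The negative terms can simply be dropped. The positive terms carry the decaying factors $\mathrm{e}^{-\frac{r}{1-\alpha}t^{1-\alpha}}$ and $\mathrm{e}^{-\frac{4}{3}\frac{r}{1-\alpha}t^{1-\alpha}}$, together with powers $t^{-\alpha}$ and $t^{-2\alpha}$ that are bounded for $t \ge T > 0$, so this holds. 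If the integration from $0$ in that lemma causes trouble, I would instead integrate $\dot y$ from $T$, replacing $\|y(0)\|$ by $\|y(T)\|$. The argument then needs $\|\dot y\|$ only where it is actually controlled, and the resulting constants are harmless.
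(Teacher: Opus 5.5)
Your proposal is correct and matches the paper's proof: the theorem is obtained by adding the bounds of Lemma~\ref{lem:order_main_alpha} and Lemma~\ref{lem:order_remain_alpha}, with $f-f_*\ge 0$ giving the lower side. Your suggestion to integrate $\dot y$ from $T$ rather than $0$ is a worthwhile repair, because the paper's proof of Lemma~\ref{lem:order_remain_alpha} uses $\|\dot y(s)\|\le\sqrt{2\mathcal{E}(T)}\,\mathrm{e}^{\frac{1}{3}\frac{r}{1-\alpha}s^{1-\alpha}}$ on $[0,T]$, where this bound has only been established for $s\ge T$; replacing $\|y(0)\|$ by $\|y(T)\|$ changes only the constants.
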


\begin{proof}
    This follows from Lemmas \ref{lem:order_main_alpha} and \ref{lem:order_remain_alpha}.
\end{proof}

\section{Conclusion}

In this paper, 
we performed the improved analysis by refining the evaluation of the Lyapunov functions given by our computer-assisted framework in \cite{TabeiTanaka2025}. 
The key idea is to control a modified quantity in~\eqref{eq:mod_quant}
that allows us to obtain sharper bounds. 
As a result, we have 
reproduced the known rate for the dynamical system in~\eqref{eq:NAG} 
and
derived a strictly improved rate for the system in~\eqref{eq:gen_NAG_alpha}. 

A natural direction for future research is to further automate this process, 
incorporating not only the discovery of Lyapunov functions but also the subsequent analytical steps used to derive convergence rates in this paper.
Such an approach may lead to a fully systematic framework for the analysis and design of optimization-related dynamical systems.

\acknowledgments

The authors thank Dr. Kansei Ushiyama for his variable advice on this work. 
The second author was supported by JSPS KAKENHI Grant Number
JP24K00536 (Grant-in-Aid for Scientific Research (B)).

\references

\end{document}